\def\bA{{\bm{A}}}
\def\bB{{\bm{B}}}
\begin{document}

\newtheorem{problem}{Problem}

\newtheorem{theorem}{Theorem}[section]
\newtheorem{corollary}[theorem]{Corollary}
\newtheorem{definition}[theorem]{Definition}
\newtheorem{conjecture}[theorem]{Conjecture}
\newtheorem{question}[theorem]{Question}
\newtheorem{lemma}[theorem]{Lemma}
\newtheorem{proposition}[theorem]{Proposition}
\newtheorem{quest}[theorem]{Question}
\newtheorem{example}[theorem]{Example}

\newenvironment{proof}{\noindent {\bf
Proof.}}{\rule{2mm}{2mm}\par\medskip}

\newenvironment{proofof3}{\noindent {\bf
Proof of  Theorem 1.2.}}{\rule{2mm}{2mm}\par\medskip}

\newenvironment{proofof5}{\noindent {\bf
Proof of  Theorem 1.3.}}{\rule{2mm}{2mm}\par\medskip}

\newcommand{\remark}{\medskip\par\noindent {\bf Remark.~~}}
\newcommand{\pp}{{\it p.}}
\newcommand{\de}{\em}

\title{  {An Oppenheim type  inequality for positive definite block  matrices}\thanks{
This work was supported by  NSFC (Grant Nos. 11931002 and 11671124).
This paper was published at Linear  and Multilinear Algebra: \url{https://doi.org/10.1080/03081087.2021.1882370} 
\newline 
E-mail addresses: ytli0921@hnu.edu.cn (Y. Li),
ypeng1@hnu.edu.cn (Y. Peng, corresponding author).}
 }

\author{Yongtao Li,   Yuejian Peng$^{\dag}$\\
{\small School of Mathematics, Hunan University} \\
{\small Changsha, Hunan, 410082, P.R. China }  }

\date{May 15, 2020}

\maketitle

\vspace{-0.5cm}

\begin{abstract}
We present an Oppenheim type determinantal
inequality for positive definite block matrices.
Recently, Lin [Linear Algebra Appl. 452 (2014) 1--6]
 proved a remarkable extension of
Oppenheim type inequality for block matrices,
which solved a conjecture of G\"{u}nther and Klotz.
There is a requirement that two matrices commute in Lin's result.
The motivation of this paper is to obtain another natural and general extension
of Oppenheim type inequality for block matrices to get rid of the requirement that two matrices commute.
 \end{abstract}

{{\bf Key words:}
Hadamard product;
Oppenheim's inequality;
Fischer's inequality;
Positive definite;
Block matrices.  }

{{\bf 2010 Mathematics Subject Classification.}  15A45, 15A60, 47B65.}

\section{Introduction}

\label{sec1}

We use the following standard notation.
The set of $m\times n$ complex matrices is denoted by $\mathbb{M}_{m\times n}(\mathbb{C})$,
or simply by $\mathbb{M}_{m\times n}$.
When $m=n$, we write $\mathbb{M}_n$ for $\mathbb{M}_{n\times n}$.
The identity matrix of order $n$ is denoted by  $I_n$, or $I$ for short.
Given two matrices $A=[a_{ij}]$ and $B=[b_{ij}]$ with the same order,
the {\it Hadamard product} of $A$ and $B$ is defined as $A\circ B=[a_{ij}b_{ij}]$.
It is easy to verify that
$(A \circ B) \circ {C}=A \circ (B \circ {C})$,
so the Hadamard product of $A^{(1)},\ldots ,A^{(m)}$ could be denoted by
$\prod_{i=1}^m\circ A^{(i)}$.
By convention, the $\mu \times \mu$ leading principal submatrix of $A$
is denoted by $A_{\mu}$, i.e., $A_{\mu}=[a_{ij}]_{i,j=1}^{\mu}$.

Let  $A=[a_{ij}]_{i,j=1}^n\in \mathbb{M}_n$ be  positive semidefinite.
 The  Hadamard inequality says that
\begin{equation} \label{eqhada}
 \prod_{i=1}^n a_{ii} \ge \det A.
\end{equation}
If both $A$ and $B$ are positive definite (semidefinite),
it is well-known that $A\circ B$ is positive definite (semidefinite);
see, e.g., \cite[p. 479]{HJ13}.
Moreover,
the celebrated Oppenheim inequality (see \cite{Oppenheim30} or \cite[p. 509]{HJ13}) states that
 \begin{equation} \label{eq1}
\det (A\circ B) \ge \det A\cdot \prod_{i=1}^n b_{ii} \ge \det (AB).
\end{equation}
By setting $B=I_n$, then (\ref{eq1}) is reduced to  (\ref{eqhada}).
Note that $A\circ B=B\circ A$, thus (\ref{eq1}) also implies
\begin{equation} \label{eq2}
 \det (A\circ B) \ge \det B\cdot \prod_{i=1}^n a_{ii} \ge \det (AB).
\end{equation}
The following  inequality (\ref{eq3}) not merely generalized Oppenheim's result,
but also presented a well connection between  (\ref{eq1}) and (\ref{eq2});
see \cite[Theorem 3.7]{Styan73} or \cite[pp. 509--510]{HJ13} for more details.
\begin{equation} \label{eq3}
\det (A\circ B) + \det (AB) \ge \det A\cdot \prod_{i=1}^n b_{ii} + \det B\cdot \prod_{i=1}^n a_{ii}.
\end{equation}
Inequality (\ref{eq3}) is usually called Oppenheim-Schur's inequality.
Furthermore, Chen \cite{Chen03}  generalized  (\ref{eq3}) and proved
the following implicit improvement.
If $A$ and $B$ are $n\times n$ positive definite matrices, then
\begin{equation} \label{eq4}
\det (A\circ B) \ge \det (AB)\prod_{\mu=2}^n
\left( \frac{a_{\mu\mu}\det A_{\mu-1}}{\det A_{\mu}} +
\frac{b_{\mu\mu}\det B_{\mu-1}}{\det B_{\mu}} -1  \right),
\end{equation}
where $A_\mu=[a_{ij}]_{i,j=1}^{\mu}$ and $B_{\mu}=[b_{ij}]_{i,j}^{\mu}$
for every $\mu =1,2,\ldots ,n$.

The significance and applicability of Hadamard product are well known in the
literature. For example, this product is used to communication and information theory in correcting codes of satellite transmissions,  signal processing and
pattern recognition, and is also
used to discrete combinatorial geometry and graph theory in
interrelations between Hadamard matrices and different combinatorial
configurations of block designs and Latin square. Applications
can also be found in statistical analysis. For more details and
applications, we refer the reader to the survey papers \cite{Ag85,HW78,Styan73,SY92}.

Over the years, various generalizations and extensions of
(\ref{eq3}) and (\ref{eq4}) have been obtained in
the literature.
For instance, see \cite{Zhang04, Zhang09} for the equality cases;
see \cite{Ando80, LZ97, YL00, Chen07, FL16} for the extensions of $M$-matrices.
It is worth noting that Lin \cite{Lin14} recently
gave a remarkable extension (Theorem \ref{thmlin}) of Chen's result (\ref{eq4})
for positive definite block matrices.
This solved a conjecture of G\"{u}nther and Klotz  \cite{GK12}.
Before stating Lin's result,
we need to introduce  the definition of  {\it block Hadamard product},
which was first introduced  in \cite{Horn91}.

Let $\mathbb{M}_n(\mathbb{M}_k)$ be the set of $n\times n$ block matrices
with each block being a $k\times k$ matrix.
The element of $\mathbb{M}_n(\mathbb{M}_k)$ is usually written as
the bold letter $\bA=[A_{ij}]_{i,j=1}^n$, where $A_{ij}\in \mathbb{M}_k$ for all $1\le i,j\le n$.
Given  $\bA=[A_{ij}],\bB=[B_{ij}]\in \mathbb{M}_n(\mathbb{M}_k)$,
the block Hadamard product of $\bA$ and $\bB$ is  given as $\bA\,\Box\,\bB:=[A_{ij}B_{ij}]_{i,j=1}^n$,
where $A_{ij}B_{ij}$ denotes the usual matrix product of $A_{ij}$ and $B_{ij}$.
Clearly, when $k=1$, that is, $A$ and $B$ are $n\times n$ matrices with complex entries,
then the block Hadamard product coincides with the classical Hadamard product;
when $n=1$, it is identical with the usual matrix product.
Positive definite block matrices are most appealing and extensively studied
over the recent years since it leads to a number of versatile and elegant matrix inequalities;
see, e.g., \cite{KL17, Choi532, LZ17, Gumus18, Zhang19, DL20,Kim17}.

Now, Lin's result could be stated as follows.

\begin{theorem} (see \cite{Lin14}) \label{thmlin}
Let $\bA =[A_{ij}]_{i,j=1}^n $ and $\bB =[B_{ij}]_{i,j=1}^n\in \mathbb{M}_n(\mathbb{M}_k)$
be  positive definite matrices such that
every $A_{ij}$ of $\bA$ commutes with every $B_{rs}$ of $\bB$. Then
\begin{equation*}
\begin{aligned}
 \det (\bm{A}\,\Box\, \bm{B}) \ge
\det (\bm{A} \bm{B})   \prod_{\mu=2}^n
\left( \frac{\det A_{\mu \mu} \det \bm{A}_{\mu-1}}{\det \bm{A}_{\mu}}
 + \frac{\det B_{\mu \mu} \det \bm{B}_{\mu-1}}{\det \bm{B}_{\mu}} -1 \right),
\end{aligned}
\end{equation*}
where $\bA_{\mu}=[A_{ij}]_{i,j=1}^{\mu}$ and $\bB_{\mu}=[B_{ij}]_{i,j=1}^{\mu}$ denote
the $\mu \times \mu$ leading principal block submatrices of $\bA$ and $\bB$, respectively.
\end{theorem}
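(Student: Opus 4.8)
The plan is to induct on $n$, stripping the last block row and column by a Schur-complement computation so that the factors of the asserted product appear one at a time. The commuting hypothesis enters at the outset to ensure that $\bA\,\Box\,\bB$ is Hermitian, since $(A_{ij}B_{ij})^*=B_{ji}A_{ji}=A_{ji}B_{ji}$ exactly when $A_{ji}$ and $B_{ji}$ commute, and positive definite; I take the positivity of the block Hadamard product of positive (semi)definite block matrices with entrywise commuting blocks as a known structural fact. Writing $\bA=\begin{pmatrix}\bA_{n-1}&a\\ a^*&A_{nn}\end{pmatrix}$, $\bB=\begin{pmatrix}\bB_{n-1}&b\\ b^*&B_{nn}\end{pmatrix}$ with $a=[A_{in}]_{i=1}^{n-1}$, $b=[B_{in}]_{i=1}^{n-1}$, the leading block of $\bA\,\Box\,\bB$ is $\bA_{n-1}\,\Box\,\bB_{n-1}$ and the Schur complement formula gives $\det(\bA\,\Box\,\bB)=\det(\bA_{n-1}\,\Box\,\bB_{n-1})\det T$ with $T=A_{nn}B_{nn}-\gamma^*(\bA_{n-1}\,\Box\,\bB_{n-1})^{-1}\gamma$ and $\gamma=[A_{in}B_{in}]_{i=1}^{n-1}$. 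Setting $S^A=A_{nn}-a^*\bA_{n-1}^{-1}a$ and $S^B=B_{nn}-b^*\bB_{n-1}^{-1}b$ (the $k\times k$ block pivots, so $\det\bA=\det\bA_{n-1}\det S^A$ and likewise for $\bB$), the induction hypothesis applied to $\bA_{n-1},\bB_{n-1}$ reduces the inductive step to the single pivot inequality
\begin{equation*}
\det T\ \ge\ \det A_{nn}\det S^B+\det B_{nn}\det S^A-\det S^A\det S^B. \tag{$\star$}
\end{equation*}

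To prove $(\star)$ I first bound $T$ below by a matrix depending only on the pivots. Put $P=a^*\bA_{n-1}^{-1}a$, $Q=b^*\bB_{n-1}^{-1}b$, so $S^A=A_{nn}-P$ and $S^B=B_{nn}-Q$, and form the singular positive semidefinite augmented matrices $\begin{pmatrix}\bA_{n-1}&a\\ a^*&P\end{pmatrix}$ and $\begin{pmatrix}\bB_{n-1}&b\\ b^*&Q\end{pmatrix}$, whose Schur complements vanish. Their blocks again commute entrywise, so the same positivity fact makes their block Hadamard product positive semidefinite; its $(2,2)$ Schur complement is $PQ-\gamma^*(\bA_{n-1}\,\Box\,\bB_{n-1})^{-1}\gamma\succeq0$, that is, $T\succeq A_{nn}B_{nn}-PQ$. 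Writing $X=A_{nn}$, $U=S^A$ (in the algebra generated by the blocks of $\bA$) and $Y=B_{nn}$, $V=S^B$ (in the algebra generated by the blocks of $\bB$), these two families commute, and a short computation gives $A_{nn}B_{nn}-PQ=XV+UY-UV$, which is Hermitian and positive semidefinite. Since $\det$ is monotone on the positive semidefinite (L\"owner) cone, $(\star)$ follows once I establish the determinant lemma
\begin{equation*}
\det(XV+UY-UV)\ \ge\ \det X\det V+\det U\det Y-\det U\det V
\end{equation*}
for Hermitian matrices with $X\succeq U\succ0$, $Y\succeq V\succ0$ and every element of $\{X,U\}$ commuting with every element of $\{Y,V\}$.

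This determinant lemma is the crux and the step I expect to resist most. Because $V\succ0$ commutes with $X$ and $U$, the congruence by $V^{-1/2}$ collapses it: one gets $\det(XV+UY-UV)=\det V\det(X+U\widetilde{Q})$ with $\widetilde{Q}:=V^{-1/2}YV^{-1/2}-I\succeq0$ still commuting with $X,U$, and the lemma becomes the cleaner statement $\det(X+U\widetilde{Q})\ge\det X+\det U(\det(I+\widetilde{Q})-1)$. The genuine obstacle is that $X$ and $U$ need not commute with each other; the remedy is that $\widetilde{Q}$ commutes with both, so $X$ and $U$ preserve each eigenspace $E_\ell$ of $\widetilde{Q}$ (eigenvalue $\mu_\ell\ge0$, dimension $p_\ell$). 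Compressing to these invariant subspaces factorizes $\det(X+U\widetilde{Q})=\prod_\ell\det(X_\ell+\mu_\ell U_\ell)$ with $X_\ell\succeq U_\ell\succ0$ on $E_\ell$.

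It then suffices to prove, on a single eigenspace, the scalar-parameter inequality $\det(X_\ell+\mu_\ell U_\ell)\ge\det X_\ell+\det U_\ell((1+\mu_\ell)^{p_\ell}-1)$, and to recombine the factors. The former I would prove by writing $\det(X_\ell+\mu_\ell U_\ell)=\det X_\ell\prod_{j}(1+\mu_\ell\sigma_j)$, where $\sigma_j\in[0,1]$ are the eigenvalues of $X_\ell^{-1/2}U_\ell X_\ell^{-1/2}$ and $\prod_j\sigma_j=\det U_\ell/\det X_\ell$; expanding in elementary symmetric functions and using $e_r(\sigma)\ge\binom{p_\ell}{r}\prod_j\sigma_j$ for $\sigma_j\in[0,1]$ (each of the $\binom{p_\ell}{r}$ monomials dominates the full product) gives $\prod_j(1+\mu_\ell\sigma_j)\ge1+(\prod_j\sigma_j)((1+\mu_\ell)^{p_\ell}-1)$, which is the required bound after multiplying by $\det X_\ell$. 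The recombination rests on the elementary super-multiplicativity $\prod_\ell(a_\ell+c_\ell(w_\ell-1))\ge\prod_\ell a_\ell+\prod_\ell c_\ell(\prod_\ell w_\ell-1)$, valid when $a_\ell\ge c_\ell\ge0$ and $w_\ell\ge1$ (a two-factor expansion, then induction), applied with $a_\ell=\det X_\ell$, $c_\ell=\det U_\ell$, $w_\ell=(1+\mu_\ell)^{p_\ell}$; since $\prod_\ell a_\ell=\det X$, $\prod_\ell c_\ell=\det U$ and $\prod_\ell w_\ell=\det(I+\widetilde{Q})$, this returns exactly the reduced lemma. Feeding the resulting $(\star)$ into the Schur-complement recursion for $\mu=2,\dots,n$ assembles the full product and proves the theorem.
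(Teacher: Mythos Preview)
The paper does not prove Theorem~\ref{thmlin}; it is quoted from Lin's 2014 paper as background, and the paper's own work concerns the \emph{entrywise} Hadamard product $\circ$ (Theorem~\ref{thm26}), proved by a quite different telescoping argument through Lemmas~\ref{lem21} and~\ref{lem25}. So there is no in-paper proof of this statement to compare against.

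That said, your argument is correct. The Schur-complement induction and the bound $T\succeq A_{nn}B_{nn}-PQ=XV+UY-UV$ obtained from the singular augmented matrices are exactly the maneuvers the paper attributes to~\cite{Lin14} (see the proof of Proposition~\ref{prop23}, whose ``root'' is credited there). Where you diverge from Lin is in establishing the pivot inequality~$(\star)$. From the paper's attribution of Lemma~\ref{lemlin} to~\cite{Lin14}, Lin's route is visibly shorter: under the commuting hypothesis one has $T\succeq XV\succeq UV$, $T\succeq UY\succeq UV$, and $T+UV\succeq XV+UY$, so Lemma~\ref{lemlin} applied to the quadruple $(T,UV,XV,UY)$ gives $\det T+\det(UV)\ge\det(XV)+\det(UY)$, which is precisely~$(\star)$. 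You instead prove the stand-alone determinant lemma $\det(XV+UY-UV)\ge\det X\det V+\det U\det Y-\det U\det V$ by passing to the eigenspaces of $\widetilde Q=V^{-1/2}YV^{-1/2}-I$, bounding $\prod_j(1+\mu\sigma_j)$ via $e_r(\sigma)\ge\binom{p}{r}\prod_j\sigma_j$, and recombining with a super-multiplicativity inequality. Both approaches are valid; Lin's packages the combinatorics into Lemma~\ref{lemlin}, while yours is longer but entirely self-contained and incidentally yields a determinant inequality of independent interest.
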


Clearly, when $k=1$, Theorem \ref{thmlin} reduces to Chen's result (\ref{eq4}).

Motivated by Theorem \ref{thmlin},
we will give another natural and general extension of (\ref{eq4})
for  positive definite block  matrices.
The condition in Theorem \ref{thmlin} that
every $A_{ij}$ of $\bA$ commutes with every $B_{rs}$ of $\bB$ is harsh and strong when
the blocks are of order at least two.
Our  extension (Theorem \ref{thm11}) has no requirement on the commutation assumptions.
It also can be viewed as a  complement of Theorem \ref{thmlin}.

\begin{theorem} \label{thm11}
Let $\bA^{(i)}=\bigl[ A^{(i)}_{rs}\bigr]_{r,s=1}^n\in \mathbb{M}_n(\mathbb{M}_k), i=1,\ldots ,m$
be positive definite. Then
\begin{equation*}
\begin{aligned}
 \det \left( \prod_{i=1}^m\circ \bm{A}^{(i)} \right) \ge
\det \left( \prod_{i=1}^m  \bm{A}^{(i)}\right)   \prod_{\mu=2}^n
\left( \sum_{i=1}^m \frac{\det A^{(i)}_{\mu \mu} \det \bm{A}^{(i)}_{\mu-1}}{\det \bm{A}^{(i)}_{\mu}}
  -(m-1) \right),
\end{aligned}
\end{equation*}
where $\bA^{(i)}_{\mu}$ stands for
the $\mu \times \mu$ leading principal block submatrix of $\bA^{(i)}$.
\end{theorem}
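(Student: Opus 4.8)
The plan is to peel off the determinant one block-row at a time, reducing the whole inequality to a single ``last block pivot'' estimate, and then to prove that estimate, which carries all the analytic content. For a block matrix $\bm{X}\in\mathbb{M}_n(\mathbb{M}_k)$ with nonsingular leading principal block submatrices $\bm{X}_\mu$, write $\gamma_\mu(\bm{X}):=\det\bm{X}_\mu/\det\bm{X}_{\mu-1}$ (with $\det\bm{X}_0:=1$), so that $\det\bm{X}=\prod_{\mu=1}^n\gamma_\mu(\bm{X})$. For positive definite $\bm{A}^{(i)}$, Fischer's inequality gives $\gamma_\mu(\bm{A}^{(i)})\le\det A^{(i)}_{\mu\mu}$, so each quotient $\det A^{(i)}_{\mu\mu}/\gamma_\mu(\bm{A}^{(i)})$ is $\ge 1$ and every factor on the right of Theorem~\ref{thm11} is $\ge 1$; in particular the interesting range is $m\ge 2$.

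First I would record that $\bm{C}:=\prod_{i=1}^m\circ\bm{A}^{(i)}$ has leading principal block submatrices $\bm{C}_\mu=\prod_{i=1}^m\circ\bm{A}^{(i)}_\mu$ and that $\gamma_1(\bm{C})=\det\bigl(\prod_i A^{(i)}_{11}\bigr)=\prod_i\gamma_1(\bm{A}^{(i)})$. Writing $\det\bigl(\prod_i\bm{A}^{(i)}\bigr)=\prod_\mu\prod_i\gamma_\mu(\bm{A}^{(i)})$ and regrouping, the right-hand side of Theorem~\ref{thm11} becomes $\gamma_1(\bm{C})\prod_{\mu=2}^n R_\mu$, where $R_\mu:=\sum_i\det A^{(i)}_{\mu\mu}\prod_{j\ne i}\gamma_\mu(\bm{A}^{(j)})-(m-1)\prod_i\gamma_\mu(\bm{A}^{(i)})$. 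Since $\det\bm{C}=\prod_{\mu=1}^n\gamma_\mu(\bm{C})$ and $R_\mu=\prod_i\gamma_\mu(\bm{A}^{(i)})\bigl(\sum_i\det A^{(i)}_{\mu\mu}/\gamma_\mu(\bm{A}^{(i)})-(m-1)\bigr)\ge 0$, the theorem reduces to the pivotwise inequality $\gamma_\mu(\bm{C})\ge R_\mu$ for $2\le\mu\le n$. As both sides depend only on the leading $\mu\times\mu$ block submatrices, it is enough to treat $\mu=n$; organizing this as an induction on $\mu$ also delivers $\det\bm{C}_\mu>0$ at each stage, which is what legitimizes writing the pivots at all.

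This isolates the key lemma. Partition $\bm{A}^{(i)}=\bigl(\begin{smallmatrix}\bm{A}^{(i)}_{n-1}&\xi_i\\ \xi_i^*&A^{(i)}_{nn}\end{smallmatrix}\bigr)$, set $S_i:=A^{(i)}_{nn}-\xi_i^*(\bm{A}^{(i)}_{n-1})^{-1}\xi_i$ (so $\det S_i=\gamma_n(\bm{A}^{(i)})$ and $0\prec S_i\preceq A^{(i)}_{nn}$), and write $\bm{C}=\bigl(\begin{smallmatrix}\bm{C}_{n-1}&\eta\\ \zeta&D\end{smallmatrix}\bigr)$ with $D=\prod_i A^{(i)}_{nn}$. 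The block Schur complement gives $\gamma_n(\bm{C})=\det(D-\zeta\bm{C}_{n-1}^{-1}\eta)$, so the target is
\[
\det\bigl(D-\zeta\bm{C}_{n-1}^{-1}\eta\bigr)\ \ge\ \sum_{i=1}^m\det A^{(i)}_{nn}\prod_{j\ne i}\det S_j-(m-1)\prod_{i=1}^m\det S_i,
\]
equivalently, after dividing by $\prod_i\det S_i$, the superadditive form $\det(D-\zeta\bm{C}_{n-1}^{-1}\eta)/\prod_i\det S_i-1\ge\sum_i(\det A^{(i)}_{nn}/\det S_i-1)$. When $k=1$, writing $a,b$ for the truncated last columns of $A,B$ and $p=a^*A_{n-1}^{-1}a$, $q=b^*B_{n-1}^{-1}b$, it collapses after the identity $(a_{nn}-p)(b_{nn}-q)+p(b_{nn}-q)+q(a_{nn}-p)=a_{nn}b_{nn}-pq$ to $(a\circ b)^*(A_{n-1}\circ B_{n-1})^{-1}(a\circ b)\le pq$, the quadratic-form bound underlying Chen's inequality~(\ref{eq4}); the block proof must replace this scalar collapse by a genuine matrix estimate.

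The hard part will be precisely this last inequality, and I expect it to dominate the work for three intertwined reasons. First, $\bm{C}$ is in general neither Hermitian nor positive definite, and $D=\prod_i A^{(i)}_{nn}$ need not be Hermitian, so even the reality and positivity of $\det(D-\zeta\bm{C}_{n-1}^{-1}\eta)$ must be proved; I would fold this into the induction on $\mu$. Second, $\det$ is not additive across the Schur complement, so the $\sum_i-(m-1)$ (superadditive) shape cannot be obtained by iterating a two-factor estimate: writing $\bm{C}=\bigl(\prod_{i<m}\circ\bm{A}^{(i)}\bigr)\cp\bm{A}^{(m)}$ and applying the $m=2$ case produces a term $\prod_{i<m}\det A^{(i)}_{nn}\cdot\det S_m$, which over-counts the diagonal blocks, so all $m$ factors must be handled simultaneously. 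My approach would therefore be to bound $\zeta\bm{C}_{n-1}^{-1}\eta$ from above, in the Loewner order, by an expression assembled from the $\xi_i^*(\bm{A}^{(i)}_{n-1})^{-1}\xi_i$ via a block analogue of the quadratic-form/inverse inequality above, and then to pass to determinants using Fischer's inequality on the diagonal blocks so as to produce exactly the per-factor excesses $\det A^{(i)}_{nn}/\det S_i-1$. The friction between non-commutativity and the nonlinearity of the determinant is where the estimate is most delicate, and is the step I would budget the most care for.
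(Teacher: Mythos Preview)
Your reduction to a pivotwise inequality is sound and matches exactly what the paper does for $m=2$ (their Lemma~2.5), but two things derail the rest of the plan.

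First, you have read $\circ$ as the block Hadamard product $\Box$. In Theorem~\ref{thm11} the product is the \emph{entrywise} Hadamard product; hence $\bm{C}=\prod_i\circ\bm{A}^{(i)}$ is automatically Hermitian positive definite, the $(\mu,\mu)$ block of $\bm{C}$ is $\prod_i\circ A^{(i)}_{\mu\mu}$ (not the matrix product), $\gamma_1(\bm{C})=\det\bigl(\prod_i\circ A^{(i)}_{11}\bigr)\ge\prod_i\gamma_1(\bm{A}^{(i)})$ by Oppenheim rather than an equality, and your entire first obstruction (non-Hermiticity, sign of the Schur complement determinant) evaporates.

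Second, and more importantly, your claim that the superadditive $\sum_i-(m-1)$ shape \emph{cannot} be obtained by iterating the two-factor estimate is exactly where the paper's argument lives. Apply the $m=2$ case to $\bm{B}:=\prod_{i<m}\circ\bm{A}^{(i)}$ and $\bm{A}^{(m)}$; the resulting factor at level $\mu$ is
\[
S_\mu\;=\;\frac{\det\bigl(\prod_{i<m}\circ A^{(i)}_{\mu\mu}\bigr)\det\bm{B}_{\mu-1}}{\det\bm{B}_\mu}\;+\;\frac{\det A^{(m)}_{\mu\mu}\det\bm{A}^{(m)}_{\mu-1}}{\det\bm{A}^{(m)}_\mu}\;-\;1.
\]
You then bound the first summand \emph{not} by $\prod_{i<m}\det A^{(i)}_{\mu\mu}$ times something --- the ``over-counting'' move you rightly reject --- but simply by $1$, via Fischer's inequality applied to $\bm{B}$. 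This gives $S_\mu\ge 1$. The induction hypothesis supplies $R_\mu:=\sum_{i<m}\det A^{(i)}_{\mu\mu}\det\bm{A}^{(i)}_{\mu-1}/\det\bm{A}^{(i)}_\mu-(m-2)\ge 1$, and the elementary inequality $R_\mu S_\mu\ge R_\mu+S_\mu-1$ for $R_\mu,S_\mu\ge 1$ yields precisely $\sum_{i=1}^m(\cdot)-(m-1)$. So iteration \emph{does} work; the trick is to discard the product of diagonal blocks, keep only $S_\mu\ge 1$, and let that arithmetic inequality produce the superadditive shape. Your proposed simultaneous Loewner estimate on all $m$ factors is therefore unnecessary, and since it remains only a sketch it is also the weakest link in the proposal.
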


Additionally, based on Theorem \ref{thm11} and
a numerical inequality,
we will give the following Theorem \ref{thm25},
which is an extension of Oppenheim type inequality (\ref{eq3}).

\begin{theorem} \label{thm25}
Let $\bA^{(i)}\in \mathbb{M}_n(\mathbb{M}_{k}),i=1,2,\ldots ,m$ be positive semidefinite. Then
\begin{equation*}
\begin{aligned}
\det \left( \prod_{i=1}^m \circ \bA^{(i)}\right)  + (m-1) \prod_{i=1}^m \left(\det \bA^{(i)}\right)
 \ge  \sum_{i=1}^m \prod_{j=1, j\neq i}^m
\det \bA^{(j)} \cdot \prod_{\mu=1}^n \det A^{(i)}_{\mu\mu} .
\end{aligned}
\end{equation*}
\end{theorem}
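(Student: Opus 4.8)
The plan is to derive Theorem~\ref{thm25} from Theorem~\ref{thm11} together with one elementary numerical inequality. First I would reduce to the positive definite case: every quantity in the statement --- namely $\det\bigl(\prod_{i=1}^m\circ\bA^{(i)}\bigr)$, the determinants $\det\bA^{(i)}$, and the products $\prod_{\mu=1}^n\det A^{(i)}_{\mu\mu}$ --- is a polynomial, hence continuous, in the entries of the $\bA^{(i)}$. Thus it suffices to prove the inequality for positive definite $\bA^{(i)}$; the positive semidefinite case then follows by applying it to $\bA^{(i)}+\varepsilon I$ and letting $\varepsilon\to 0^{+}$.

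So assume each $\bA^{(i)}$ is positive definite and set
\[
t_{i,\mu}=\frac{\det A^{(i)}_{\mu\mu}\,\det\bA^{(i)}_{\mu-1}}{\det\bA^{(i)}_{\mu}},\qquad 2\le\mu\le n .
\]
Two facts drive the argument. By Fischer's inequality applied to the positive definite block matrix $\bA^{(i)}_{\mu}$, whose leading block is $\bA^{(i)}_{\mu-1}$ and whose trailing diagonal block is $A^{(i)}_{\mu\mu}$, one has $\det\bA^{(i)}_{\mu}\le\det\bA^{(i)}_{\mu-1}\det A^{(i)}_{\mu\mu}$, so $t_{i,\mu}\ge 1$ for all $i,\mu$. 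Moreover the factors $\det\bA^{(i)}_{\mu-1}/\det\bA^{(i)}_{\mu}$ telescope, giving the identity
\[
\prod_{\mu=2}^{n}t_{i,\mu}=\frac{\det\bA^{(i)}_{1}}{\det\bA^{(i)}_{n}}\prod_{\mu=2}^{n}\det A^{(i)}_{\mu\mu}=\frac{\prod_{\mu=1}^{n}\det A^{(i)}_{\mu\mu}}{\det\bA^{(i)}} .
\]

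Write $P=\prod_{i=1}^m\det\bA^{(i)}$. Using the telescoping identity, the right-hand side of Theorem~\ref{thm25} equals $P\sum_{i=1}^m\prod_{\mu=2}^{n}t_{i,\mu}$, while Theorem~\ref{thm11} gives the lower bound $\det\bigl(\prod_{i=1}^m\circ\bA^{(i)}\bigr)\ge P\prod_{\mu=2}^{n}\bigl(\sum_{i=1}^m t_{i,\mu}-(m-1)\bigr)$. Since $P>0$, dividing through shows that Theorem~\ref{thm25} reduces to the purely numerical inequality
\[
\prod_{\mu=2}^{n}\Bigl(\sum_{i=1}^{m}t_{i,\mu}-(m-1)\Bigr)+(m-1)\ge\sum_{i=1}^{m}\prod_{\mu=2}^{n}t_{i,\mu},\qquad t_{i,\mu}\ge 1 .
\]

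This numerical inequality is the heart of the matter, and proving it is the step I expect to require the real care. I would substitute $t_{i,\mu}=1+a_{i,\mu}$ with $a_{i,\mu}\ge 0$, so each left-hand factor becomes $1+\sum_{i}a_{i,\mu}$. Expanding both sides over subsets $S\subseteq\{2,\dots,n\}$, the left-hand product equals $\sum_{S}\sum_{f\colon S\to[m]}\prod_{\mu\in S}a_{f(\mu),\mu}$ and the right-hand sum equals $\sum_{S}\sum_{i=1}^m\prod_{\mu\in S}a_{i,\mu}$, the latter being exactly the contribution of the constant functions $f$. Subtracting, the terms with $|S|\ge 2$ leave sums over \emph{non-constant} $f$ of products of nonnegative quantities, hence are $\ge 0$; the terms with $|S|=1$ cancel identically; and the empty set contributes $1$ on the left against $m$ on the right, a deficit of $m-1$ that is precisely offset by the additive $+(m-1)$. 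The only delicate point is this bookkeeping of the low-cardinality terms; once it is in place the inequality, and hence the theorem, follows.
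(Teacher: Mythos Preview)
Your proposal is correct and follows essentially the same route as the paper: reduce to the positive definite case by perturbation, apply Theorem~\ref{thm11}, use Fischer's inequality to get $t_{i,\mu}\ge 1$, telescope to identify $\prod_{\mu=2}^n t_{i,\mu}=\prod_{\mu}\det A^{(i)}_{\mu\mu}/\det\bA^{(i)}$, and then reduce to the numerical inequality $\prod_{\mu}\bigl(\sum_i t_{i,\mu}-(m-1)\bigr)\ge\sum_i\prod_{\mu}t_{i,\mu}-(m-1)$ (the paper's Lemma~\ref{lem}). The only difference is that the paper proves this numerical lemma by induction on $n$, whereas you give a direct combinatorial expansion via $t_{i,\mu}=1+a_{i,\mu}$; your bookkeeping of the $|S|=0,1$ terms is correct, so both arguments are valid and of comparable length.
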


The paper is organized as follows.
Our proof of Theorem \ref{thm11} is by induction on positive integer $m$,
so we will treat the base case $m=2$  in Section \ref{sec2} separately,
and in this section, we  give some auxiliary lemmas and propositions
to facilitate our proof.
Some new determinantal inequalities for  positive definite block matrices are also included.
In Section \ref{sec3}, we will give our proof of  Theorem \ref{thm11}
and   Theorem \ref{thm25}.

\section{The base case $m=2$}
\label{sec2}

If $X$ is positive semidefinite, we write $X\ge 0$.
For two Hermitian matrices $A$ and $B$ with the same order,
$A\ge B$ means $A-B\ge 0$.
It is easy to verify that $\ge $ is a partial ordering on the set of Hermitian matrices,
referred to {\it L\"{o}wner ordering}.
If $A=\Bigl[ \begin{smallmatrix}A_{11} & A_{12} \\ A_{21} & A_{22}  \end{smallmatrix}\Bigr]$
is a square matrix with $A_{11}$  nonsingular, then the {\it Schur complement} of $A_{11}$ in $A$
is defined as $A/A_{11}:=A_{22}-A_{21}A_{11}^{-1}A_{12}$.
It is obvious that $\det (A/A_{11})=(\det A)/(\det A_{11})$.
We refer to the integrated survey \cite{Zhang05} for more applications of Schur complement.

\begin{lemma} \label{lem21}
Let $A=[a_{ij}]_{i,j=1}^n$ and $B=[b_{ij}]_{i,j=1}^n$ be positive definite matrices. Then
\[  \frac{\det (A_p\circ B_p)}{\det (A_{p-1}\circ B_{p-1})} +
\frac{\det (A_pB_p)}{\det (A_{p-1}B_{p-1})} \ge
\frac{a_{pp} \det B_p}{\det B_{p-1}} + \frac{b_{pp}\det A_p}{\det A_{p-1}}, \]
where $A_p=[a_{ij}]_{i,j=1}^p$ and $B_p=[b_{ij}]_{i,j=1}^p$
for every $p=1,2,\ldots ,n$.
\end{lemma}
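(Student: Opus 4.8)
The plan is to rewrite every ratio appearing in the statement as a $1\times 1$ Schur complement and then watch the bulk of the terms cancel. Write the bordered forms
\[
A_p=\begin{bmatrix} A_{p-1} & u \\ u^* & a_{pp}\end{bmatrix},\qquad
B_p=\begin{bmatrix} B_{p-1} & v \\ v^* & b_{pp}\end{bmatrix},
\]
where $u,v\in\mathbb{C}^{p-1}$ collect the off-diagonal entries of the last column. Using the identity $\det(A/A_{11})=(\det A)/(\det A_{11})$ recalled just above the lemma (with $A_{p-1}$ in the role of $A_{11}$), one gets
\[
\frac{\det A_p}{\det A_{p-1}}=a_{pp}-u^*A_{p-1}^{-1}u,\qquad
\frac{\det B_p}{\det B_{p-1}}=b_{pp}-v^*B_{p-1}^{-1}v,
\]
and, since $A_p\circ B_p$ has leading block $A_{p-1}\circ B_{p-1}$, corner $a_{pp}b_{pp}$, and off-diagonal block $u\circ v$,
\[
\frac{\det(A_p\circ B_p)}{\det(A_{p-1}\circ B_{p-1})}
=a_{pp}b_{pp}-(u\circ v)^*(A_{p-1}\circ B_{p-1})^{-1}(u\circ v).
\]
Finally $\det(A_pB_p)=\det A_p\cdot\det B_p$ makes the second ratio on the left equal to the product of the first two scalars above.

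Abbreviate $P=u^*A_{p-1}^{-1}u$, $Q=v^*B_{p-1}^{-1}v$, and $R=(u\circ v)^*(A_{p-1}\circ B_{p-1})^{-1}(u\circ v)$, all nonnegative. Substituting these four expressions into the claimed inequality and expanding the product $(a_{pp}-P)(b_{pp}-Q)$, the quantity $2a_{pp}b_{pp}-a_{pp}Q-b_{pp}P$ appears on both sides and cancels, so the entire lemma is equivalent to the single scalar inequality $PQ\ge R$, that is,
\[
\bigl(u^*A_{p-1}^{-1}u\bigr)\bigl(v^*B_{p-1}^{-1}v\bigr)\ \ge\
(u\circ v)^*(A_{p-1}\circ B_{p-1})^{-1}(u\circ v).
\]

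It then remains to prove $PQ\ge R$, which I would obtain by a bordering trick combined with the Schur product theorem. Consider
\[
\widetilde{A}=\begin{bmatrix} A_{p-1} & u \\ u^* & P\end{bmatrix},\qquad
\widetilde{B}=\begin{bmatrix} B_{p-1} & v \\ v^* & Q\end{bmatrix}.
\]
By construction the Schur complement of $A_{p-1}$ in $\widetilde{A}$ is $P-u^*A_{p-1}^{-1}u=0$, so $\widetilde{A}\ge 0$; likewise $\widetilde{B}\ge 0$. Their block Hadamard (here ordinary Hadamard) product
\[
\widetilde{A}\circ\widetilde{B}=\begin{bmatrix} A_{p-1}\circ B_{p-1} & u\circ v \\ (u\circ v)^* & PQ\end{bmatrix}
\]
is again positive semidefinite. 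Since $A_{p-1}\circ B_{p-1}$ is positive definite (as $A_{p-1},B_{p-1}$ are), positive semidefiniteness forces the corresponding Schur complement to be nonnegative, namely $PQ-R\ge 0$, which is precisely what we need.

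The only genuine insight is the bordering step that realizes the reduced inequality $PQ\ge R$ as the nonnegativity of a Schur complement of a Hadamard product of two semidefinite matrices; everything else is Schur-complement bookkeeping. I expect the cancellation down to $PQ\ge R$ to be the step worth writing out carefully, since it is what makes the statement tractable. The case $p=1$ is degenerate (there are no vectors $u,v$, hence $P=Q=R=0$) and holds with equality, so it needs no separate treatment.
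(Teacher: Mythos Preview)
Your proof is correct and is essentially the paper's own argument. The paper constructs the same bordered matrices (written there as $\widehat{A_p},\widehat{B_p}$, with $(p,p)$-entry $a_{pp}-\det A_p/\det A_{p-1}=u^*A_{p-1}^{-1}u$, i.e.\ your $P$), takes their Hadamard product, and reads off the inequality from $\det(\widehat{A_p}\circ\widehat{B_p})\ge 0$; your version simply performs the algebraic cancellation first and then invokes the Schur complement of the same Hadamard product, which is the same content in a slightly different order.
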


\begin{proof}
First, we denote $\alpha :=[a_{1p},\ldots ,a_{p-1,p}]^T$ and
$\beta :=[b_{1p},\ldots ,b_{p-1,p}]^T$. Setting
\begin{equation*}
\widehat{A_p}:=\begin{bmatrix}A_{p-1} &\alpha \\ {\alpha}^* & a_{pp} -
\frac{\det A_p}{\det A_{p-1}}\end{bmatrix},\quad
\widehat{B_p}:=\begin{bmatrix}B_{p-1} &\beta  \\ {\beta}^* & b_{pp}- \frac{\det B_p}{\det B_{p-1}}\end{bmatrix} .
\end{equation*}
It is easy to see that both $\widehat{A_p}$ and $\widehat{B_p}$ are singular positive semidefinite,
then $\widehat{A_p}\circ \widehat{B_p}$ is positive semidefinite. By taking determinant, it follows that
\[
\det (\widehat{A_p}\circ \widehat{B_p})
= \det \begin{bmatrix}A_{p-1}\circ B_{p-1} &\alpha \circ \beta \\ \alpha^*\circ {\beta}^* &
\left(a_{pp}- \frac{\det A_p}{\det A_{p-1}}\right)\!
\left( b_{pp}- \frac{\det B_p}{\det B_{p-1}} \right)\end{bmatrix}\ge  0.\]
By a direct computation, we get
\begin{equation*}
\det (A_p\circ B_p)+ \det (A_{p-1}\circ B_{p-1})\left( -\frac{a_{pp}\det B_p}{\det B_{p-1}}
-\frac{b_{pp}\det A_p}{\det A_{p-1}}+\frac{\det (A_pB_p)}{\det (A_{p-1}B_{p-1})} \right) \ge 0.
\end{equation*}
This completes the proof.
\end{proof}

The following lemma  first appeared in \cite{Lin14}.
The author in \cite{Lin16} proved the same result
under a weaker assumption $X\ge W,X\ge Z$ and $X+Y\ge W+Z$.

\begin{lemma} (see \cite{Lin14} or \cite{Lin16}) \label{lemlin}
Let $X,Y,W$ and $Z$ be positive semidefinite.
If $X\ge W\ge Y,X\ge Z\ge Y$ and $X+Y\ge W+Z$, then
\[ \det X + \det Y \ge \det W +\det Z.  \]
\end{lemma}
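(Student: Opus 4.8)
The plan is to observe that the full chain of hypotheses is more than we need, and to recast the target as a \emph{supermodularity} property of the determinant. First I would set $A:=W-Y$ and $B:=Z-Y$, which are positive semidefinite because $W\ge Y$ and $Z\ge Y$. A one-line computation gives $Y+A+B=W+Z-Y$, so the assumption $X+Y\ge W+Z$ is exactly the statement $X\ge Y+A+B$. Since the determinant is monotone on positive semidefinite matrices in the L\"owner order, this yields $\det X\ge\det(Y+A+B)$, and consequently it suffices to prove the sharper inequality
\[ \det(Y+A+B)+\det Y \ge \det(Y+A)+\det(Y+B), \qquad (\star) \]
for arbitrary positive semidefinite $Y,A,B$ (with $W=Y+A$, $Z=Y+B$).

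Next I would read $(\star)$ as supermodularity of the function $f(t,s):=\det(Y+tA+sB)$ on the unit square, since $(\star)$ is precisely $f(1,1)+f(0,0)\ge f(1,0)+f(0,1)$. By continuity I may assume $Y>0$ (replace $Y$ by $Y+\varepsilon I$ and let $\varepsilon\to 0^{+}$ at the end), so that $M:=Y+tA+sB$ is positive definite throughout the square and the derivatives below are legitimate. Using Jacobi's formula $\partial_s\det M=\det M\cdot\operatorname{tr}(M^{-1}B)$ together with $\partial_t M^{-1}=-M^{-1}AM^{-1}$, a direct differentiation gives
\[ \frac{\partial^{2} f}{\partial t\,\partial s}=\det M\Bigl(\operatorname{tr}(M^{-1}A)\,\operatorname{tr}(M^{-1}B)-\operatorname{tr}(M^{-1}AM^{-1}B)\Bigr). \]
Integrating this mixed partial over $[0,1]^2$ reproduces exactly the four-corner combination in $(\star)$, so everything reduces to showing the bracketed quantity is nonnegative.

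The crux, and the main obstacle, is therefore the trace inequality $\operatorname{tr}(M^{-1}A)\operatorname{tr}(M^{-1}B)\ge\operatorname{tr}(M^{-1}AM^{-1}B)$ for $M>0$ and $A,B\ge 0$. I would prove it by the substitution $\widetilde A:=M^{-1/2}AM^{-1/2}\ge 0$ and $\widetilde B:=M^{-1/2}BM^{-1/2}\ge 0$, under which the inequality becomes $\operatorname{tr}(\widetilde A)\operatorname{tr}(\widetilde B)\ge\operatorname{tr}(\widetilde A\widetilde B)$; this in turn follows from $\operatorname{tr}(\widetilde A\widetilde B)=\operatorname{tr}(\widetilde B^{1/2}\widetilde A\widetilde B^{1/2})\le\|\widetilde A\|\,\operatorname{tr}(\widetilde B)\le\operatorname{tr}(\widetilde A)\operatorname{tr}(\widetilde B)$, using the operator-norm bound $\|\widetilde A\|\le\operatorname{tr}(\widetilde A)$ valid for positive semidefinite matrices. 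As $\det M>0$, the mixed partial is nonnegative, $(\star)$ holds, and letting $\varepsilon\to 0^{+}$ completes the proof. I would remark that this argument invokes only $W\ge Y$, $Z\ge Y$, and $X+Y\ge W+Z$, never the individual dominations $X\ge W$ or $X\ge Z$, which indicates how economical the hypotheses can be made.
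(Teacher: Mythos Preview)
Your argument is correct. The paper itself does not supply a proof of this lemma; it merely cites \cite{Lin14} and remarks that \cite{Lin16} reproves it under the weaker assumption $X\ge W$, $X\ge Z$, $X+Y\ge W+Z$. So there is no in-paper proof to compare against, but your proposal stands on its own: the reduction via $A=W-Y$, $B=Z-Y$ and determinant monotonicity to the supermodularity inequality $(\star)$ is clean, the mixed-partial computation using Jacobi's formula is accurate, and the trace inequality $\operatorname{tr}(\widetilde A)\operatorname{tr}(\widetilde B)\ge \operatorname{tr}(\widetilde A\widetilde B)$ for positive semidefinite $\widetilde A,\widetilde B$ is established correctly. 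The perturbation $Y\mapsto Y+\varepsilon I$ legitimately handles the singular case.

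One point worth highlighting: your final remark identifies a relaxation \emph{different} from the one the paper attributes to \cite{Lin16}. You drop the two upper dominations $X\ge W$, $X\ge Z$ and keep $W\ge Y$, $Z\ge Y$ (together with $X+Y\ge W+Z$), whereas \cite{Lin16} does the opposite, dropping $W\ge Y$, $Z\ge Y$ and keeping $X\ge W$, $X\ge Z$. These are genuinely distinct weakenings, neither implying the other, so your proof is not merely a rederivation of the cited result but yields a complementary strengthening.
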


Let $X,Y,W$ and $Z$  be diagonal matrices. Lemma \ref{lemlin} implies
the following result, which will be used in the proof of Lemma \ref{lem25}.

\begin{corollary} \label{lem22}
Let $x_i,y_i,z_i$ and $w_i$ be nonnegative numbers.
If $x_i\ge w_i\ge y_i,x_i\ge z_i\ge y_i$ and $x_i+y_i\ge z_i+w_i$ for every $i=1,2,\ldots ,n$, then
\[ \prod_{i=1}^n x_i +\prod_{i=1}^n y_i \ge \prod_{i=1}^n w_i +
\prod_{i=1}^n z_i. \]
\end{corollary}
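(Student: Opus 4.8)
The plan is to derive this purely as the diagonal specialization of Lemma \ref{lemlin}, exactly as the sentence preceding the statement suggests. First I would form the diagonal matrices $X = \mathrm{diag}(x_1,\ldots,x_n)$, $Y = \mathrm{diag}(y_1,\ldots,y_n)$, $W = \mathrm{diag}(w_1,\ldots,w_n)$, and $Z = \mathrm{diag}(z_1,\ldots,z_n)$. Since all of the $x_i, y_i, z_i, w_i$ are nonnegative, each of these four matrices is positive semidefinite, so the standing positive semidefiniteness hypothesis of Lemma \ref{lemlin} is automatically satisfied.

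The key observation is that for diagonal matrices the L\"{o}wner ordering collapses to entrywise comparison: if $D = \mathrm{diag}(d_1,\ldots,d_n)$ and $E = \mathrm{diag}(e_1,\ldots,e_n)$, then $D \ge E$ means that $D - E = \mathrm{diag}(d_1 - e_1,\ldots,d_n - e_n)$ is positive semidefinite, which for a diagonal matrix holds if and only if $d_i \ge e_i$ for every $i$. Applying this to each pairwise inequality, the chain hypotheses $x_i \ge w_i \ge y_i$ and $x_i \ge z_i \ge y_i$ translate precisely into $X \ge W \ge Y$ and $X \ge Z \ge Y$, while $x_i + y_i \ge z_i + w_i$ for all $i$ becomes $X + Y \ge W + Z$. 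Hence every hypothesis of Lemma \ref{lemlin} holds for this choice of $X, Y, W, Z$.

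Finally, since the determinant of a diagonal matrix is the product of its diagonal entries, the conclusion $\det X + \det Y \ge \det W + \det Z$ furnished by Lemma \ref{lemlin} reads verbatim as $\prod_{i=1}^n x_i + \prod_{i=1}^n y_i \ge \prod_{i=1}^n w_i + \prod_{i=1}^n z_i$, which is the desired inequality. There is essentially no obstacle to overcome: all the analytic content already resides in Lemma \ref{lemlin}, and the only point deserving a moment's care is the verification that the L\"{o}wner ordering between diagonal matrices coincides with the entrywise ordering of their diagonals, which is a one-line consequence of the fact that a diagonal matrix is positive semidefinite exactly when its diagonal entries are nonnegative.
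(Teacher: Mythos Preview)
Your proposal is correct and follows exactly the approach indicated in the paper: specialize Lemma~\ref{lemlin} to diagonal matrices, so that the L\"{o}wner ordering becomes the entrywise ordering and the determinants become the products $\prod_i x_i$, etc. There is nothing to add.
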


We next provide an extension of Lemma \ref{lem21}
for  positive definite  block matrices by using Lemma \ref{lemlin}.
Our treatment of Proposition \ref{prop23} has its root in \cite{Lin14}.

\begin{proposition} \label{prop23}
Let $\bA=[A_{ij}]_{i,j=1}^n$ and $\bB=[B_{ij}]_{i,j=1}^n$ be positive definite. Then
\begin{align*}
&\det \bigl( (\bA_{\mu}\circ \bB_{\mu})/(\bA_{\mu-1}\circ \bB_{\mu-1}) \bigr) +
\det \bigl( (\bA_{\mu}/\bA_{\mu-1})\circ (\bB_{\mu}/\bB_{\mu-1}) \bigr) \\
&\quad \ge
\det \bigl( A_{\mu \mu}\circ (\bB_{\mu}/\bB_{\mu-1}) \bigr)+
\det \bigl( B_{\mu \mu}\circ (\bA_{\mu}/\bA_{\mu-1})\bigr),
\end{align*}
where $\bA_{\mu}=[A_{ij}]_{i,j=1}^{\mu}$ and $\bB_{\mu}=[B_{ij}]_{i,j=1}^{\mu}$
for every $\mu=1,2,\ldots ,n$.
\end{proposition}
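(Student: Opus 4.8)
The plan is to read the displayed inequality as an instance of $\det X+\det Y\ge\det W+\det Z$ for four $k\times k$ positive semidefinite matrices and to deduce it from Lemma \ref{lemlin}. Accordingly I set
\[ X=(\bA_{\mu}\circ\bB_{\mu})/(\bA_{\mu-1}\circ\bB_{\mu-1}),\qquad Y=(\bA_{\mu}/\bA_{\mu-1})\circ(\bB_{\mu}/\bB_{\mu-1}), \]
\[ W=A_{\mu\mu}\circ(\bB_{\mu}/\bB_{\mu-1}),\qquad Z=B_{\mu\mu}\circ(\bA_{\mu}/\bA_{\mu-1}). \]
Each of these is a $k\times k$ positive definite matrix, because a Hadamard product of positive definite matrices is positive definite and a Schur complement of a positive definite matrix is positive definite. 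It then suffices to verify the three L\"{o}wner relations $X\ge W\ge Y$, $X\ge Z\ge Y$ and $X+Y\ge W+Z$; Lemma \ref{lemlin} converts these into the desired determinantal inequality. Note that, unlike in the scalar Lemma \ref{lem21}, the determinant is not additive, which is exactly why Lemma \ref{lemlin} is needed here.

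To compute with $X,Y,W,Z$ I partition
\[ \bA_{\mu}=\bigl[\begin{smallmatrix}\bA_{\mu-1} & \alpha\\ \alpha^* & A_{\mu\mu}\end{smallmatrix}\bigr],\qquad \bB_{\mu}=\bigl[\begin{smallmatrix}\bB_{\mu-1} & \beta\\ \beta^* & B_{\mu\mu}\end{smallmatrix}\bigr], \]
with $\alpha,\beta$ of size $(\mu-1)k\times k$, so that $\bA_{\mu}/\bA_{\mu-1}=A_{\mu\mu}-\alpha^*\bA_{\mu-1}^{-1}\alpha$ and $\bB_{\mu}/\bB_{\mu-1}=B_{\mu\mu}-\beta^*\bB_{\mu-1}^{-1}\beta$. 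Expanding $X$ by the Schur complement formula and multiplying out $Y,W,Z$, I record the five positive semidefinite matrices $C=A_{\mu\mu}\circ B_{\mu\mu}$, $P=A_{\mu\mu}\circ(\beta^*\bB_{\mu-1}^{-1}\beta)$, $Q=(\alpha^*\bA_{\mu-1}^{-1}\alpha)\circ B_{\mu\mu}$, $R=(\alpha^*\bA_{\mu-1}^{-1}\alpha)\circ(\beta^*\bB_{\mu-1}^{-1}\beta)$ and $S=(\alpha^*\circ\beta^*)(\bA_{\mu-1}\circ\bB_{\mu-1})^{-1}(\alpha\circ\beta)$, which yield the tidy identities $X=C-S$, $W=C-P$, $Z=C-Q$ and $Y=C-P-Q+R$. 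Because $\bA_{\mu}/\bA_{\mu-1}\ge0$ and $\bB_{\mu}/\bB_{\mu-1}\ge0$, the monotonicity of the Hadamard product under the L\"{o}wner order (if $U\ge V$ and $T\ge0$ then $U\circ T\ge V\circ T$, since $(U-V)\circ T\ge0$) gives at once $P\ge R$ and $Q\ge R$; hence $W-Y=Q-R\ge0$ and $Z-Y=P-R\ge0$, while $X-W=P-S$, $X-Z=Q-S$ and $X+Y-W-Z=R-S$.

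Everything therefore hinges on the single inequality $R\ge S$, which I expect to be the main obstacle and which I would prove by the block version of the singular-matrix device of Lemma \ref{lem21}. Consider
\[ \widetilde{A}=\begin{bmatrix}\bA_{\mu-1} & \alpha\\ \alpha^* & \alpha^*\bA_{\mu-1}^{-1}\alpha\end{bmatrix},\qquad \widetilde{B}=\begin{bmatrix}\bB_{\mu-1} & \beta\\ \beta^* & \beta^*\bB_{\mu-1}^{-1}\beta\end{bmatrix}. \]
Each is positive semidefinite with zero Schur complement; indeed $\widetilde{A}=\bigl[\bA_{\mu-1}\ \ \alpha\bigr]^*\bA_{\mu-1}^{-1}\bigl[\bA_{\mu-1}\ \ \alpha\bigr]\ge0$. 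Then $\widetilde{A}\circ\widetilde{B}\ge0$, its leading block $\bA_{\mu-1}\circ\bB_{\mu-1}$ is positive definite, and its trailing $k\times k$ block is $R$, so the Schur complement $(\widetilde{A}\circ\widetilde{B})/(\bA_{\mu-1}\circ\bB_{\mu-1})=R-S$ is positive semidefinite, that is $R\ge S$. Combining $R\ge S$ with $P\ge R$ and $Q\ge R$ gives $X-W=(P-R)+(R-S)\ge0$ and $X-Z=(Q-R)+(R-S)\ge0$, so that all the hypotheses $X\ge W\ge Y$, $X\ge Z\ge Y$ and $X+Y\ge W+Z$ of Lemma \ref{lemlin} hold. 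Applying that lemma yields $\det X+\det Y\ge\det W+\det Z$, which is precisely the claimed inequality.
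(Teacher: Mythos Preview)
Your proof is correct and follows essentially the same route as the paper's: both arguments construct the singular positive semidefinite matrices $\widetilde{A},\widetilde{B}$ (the paper's $\widehat{\bA_{\mu}},\widehat{\bB_{\mu}}$), use the positive semidefiniteness of their Hadamard product to obtain the key Schur-complement inequality $R\ge S$, derive from it the three L\"{o}wner relations (the paper's (\ref{eq6}), (\ref{eq7}), (\ref{eq8})), and then invoke Lemma \ref{lemlin}. Your introduction of the auxiliary matrices $C,P,Q,R,S$ makes the bookkeeping more transparent, but the substance is identical.
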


\begin{proof}
We first denote $\bm{X}:=\begin{bmatrix}A_{1\mu} \\ \vdots \\ A_{\mu -1,\mu}  \end{bmatrix}$,
 $\bm{Y}:=\begin{bmatrix}B_{1\mu} \\ \vdots \\ B_{\mu -1,\mu}  \end{bmatrix}$ and define
\[ \widehat{\bA_{\mu}}:=\begin{bmatrix}\bA_{\mu -1} & \bm{X} \\
\bm{X}^* & \bm{X}^*\bm{A}_{\mu -1}^{-1}\bm{X}   \end{bmatrix},\quad
\widehat{\bB_{\mu}}:=\begin{bmatrix}\bB_{\mu -1} & \bm{Y} \\
\bm{Y}^* & \bm{Y}^*\bm{B}_{\mu -1}^{-1}\bm{Y}   \end{bmatrix} . \]
It is easy to see by computing Schur complement that $\widehat{\bA_{\mu}}$
and $\widehat{\bB_{\mu}}$ are singular positive semidefinite.
Therefore $\widehat{\bA_{\mu}}\circ \widehat{\bB_{\mu}}$ is positive semidefinite
and so
\[ (\bm{X}^*\bm{A}_{\mu -1}^{-1}\bm{X})\circ (\bm{Y}^*\bm{B}_{\mu -1}^{-1}\bm{Y})
\ge (\bm{X}^*\circ \bm{Y}^*)(\bA_{\mu -1}\circ \bB_{\mu -1})^{-1}(\bm{X}\circ \bm{Y}), \]
which is equivalent to
\begin{align*}
& \bigl( A_{\mu \mu}-(\bm{A}_{\mu}/\bA_{\mu -1})\bigr) \circ
\bigl( B_{\mu \mu}-(\bm{B}_{\mu}/\bB_{\mu -1})\bigr) \\
&\quad \ge
A_{\mu \mu}\circ B_{\mu \mu} -
(\bA_{\mu} \circ \bB_{\mu})/(\bA_{\mu -1}\circ \bB_{\mu -1}).
\end{align*}
Expanding the above inequality gives
\begin{equation} \label{eq6}
\begin{aligned}
&  (\bA_{\mu}\circ \bB_{\mu})/(\bA_{\mu-1}\circ \bB_{\mu-1})  +
  (\bA_{\mu}/\bA_{\mu-1})\circ (\bB_{\mu}/\bB_{\mu-1})  \\
&\quad \ge
 A_{\mu \mu}\circ (\bB_{\mu}/\bB_{\mu-1}) +
 B_{\mu \mu}\circ (\bA_{\mu}/\bA_{\mu-1}).
\end{aligned}
\end{equation}
On the other hand, since $B_{\mu \mu }\ge \bB_{\mu}/\bB_{\mu-1}$, then
by (\ref{eq6}), we have
\begin{equation*}
\begin{aligned}
&  (\bA_{\mu}\circ \bB_{\mu})/(\bA_{\mu-1}\circ \bB_{\mu-1})  -
A_{\mu \mu}\circ (\bB_{\mu}/\bB_{\mu-1})
    \\
&\quad \ge
 B_{\mu \mu}\circ (\bA_{\mu}/\bA_{\mu-1})
- (\bA_{\mu}/\bA_{\mu-1})\circ (\bB_{\mu}/\bB_{\mu-1})\\
&\quad =(B_{\mu \mu }- \bB_{\mu}/\bB_{\mu-1})\circ (\bA_{\mu}/\bA_{\mu-1}) \ge 0.
\end{aligned}
\end{equation*}
Therefore,
\begin{equation} \label{eq7}
(\bA_{\mu}\circ \bB_{\mu})/(\bA_{\mu-1}\circ \bB_{\mu-1})  \ge
A_{\mu \mu}\circ (\bB_{\mu}/\bB_{\mu-1}) \ge
(\bA_{\mu}/\bA_{\mu-1})\circ (\bB_{\mu}/\bB_{\mu-1}).
\end{equation}
Similarly, we could obtain
\begin{equation} \label{eq8}
(\bA_{\mu}\circ \bB_{\mu})/(\bA_{\mu-1}\circ \bB_{\mu-1})  \ge
B_{\mu \mu}\circ (\bA_{\mu}/\bA_{\mu-1}) \ge
(\bA_{\mu}/\bA_{\mu-1})\circ (\bB_{\mu}/\bB_{\mu-1}).
\end{equation}
Keeping (\ref{eq6}), (\ref{eq7}) and (\ref{eq8}) in mind,
then  Lemma \ref{lemlin} yields the required inequality.
\end{proof}

The following lemma  is called  Fischer's inequality,
which is an improvement as well as extension of Hadamard's inequality (\ref{eqhada})
for  positive semidefinite block matrices.

\begin{lemma} (see \cite[p. 506]{HJ13} or \cite[p. 217]{Zhang11}) \label{lemfis}
Let $A=\begin{bmatrix} A_{11} & A_{12} \\ A_{21} & A_{22} \end{bmatrix}$ be an
$n\times n$ positive semidefinite
matrix with diagonal blocks being square, then
\[ \prod_{i=1}^n a_{ii}\ge  \det A_{11} \det A_{22} \ge \det A.  \]
\end{lemma}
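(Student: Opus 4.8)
The plan is to establish the chain $\prod_{i=1}^n a_{ii} \ge \det A_{11}\det A_{22} \ge \det A$ by proving its two halves separately, reading from right to left. The right-hand inequality is the genuinely block-theoretic statement, and I would prove it via the Schur complement machinery already recalled in the excerpt. The left-hand inequality then reduces to the scalar Hadamard inequality (\ref{eqhada}) applied blockwise.

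For the right-hand inequality, first I would assume $A$ is positive definite, so that $A_{11}$ is nonsingular and the Schur complement $A/A_{11}=A_{22}-A_{21}A_{11}^{-1}A_{12}$ is defined. The identity $\det A=\det A_{11}\cdot\det(A/A_{11})$ noted in the excerpt reduces the task to showing $\det(A/A_{11})\le\det A_{22}$. Since $A$ is Hermitian we have $A_{21}=A_{12}^{*}$, and because $A_{11}>0$ the correction term $A_{12}^{*}A_{11}^{-1}A_{12}$ is positive semidefinite; hence
\[ A/A_{11}=A_{22}-A_{12}^{*}A_{11}^{-1}A_{12}\le A_{22} \]
in the L\"{o}wner ordering. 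As both $A/A_{11}$ and $A_{22}$ are positive semidefinite, monotonicity of the determinant on the positive semidefinite cone gives $\det(A/A_{11})\le\det A_{22}$, whence $\det A\le\det A_{11}\det A_{22}$.

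For the left-hand inequality, note that $A_{11}$ and $A_{22}$ are principal submatrices of the positive semidefinite matrix $A$, hence are themselves positive semidefinite. Applying (\ref{eqhada}) to each of them yields $\prod_{i}(A_{11})_{ii}\ge\det A_{11}$ and $\prod_{i}(A_{22})_{ii}\ge\det A_{22}$. Since the diagonal entries of $A_{11}$ together with those of $A_{22}$ exhaust the list $a_{11},\ldots,a_{nn}$, multiplying these two inequalities produces $\prod_{i=1}^n a_{ii}\ge\det A_{11}\det A_{22}$.

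The main obstacle is the passage to a general positive semidefinite $A$, where $A_{11}$ may be singular and the Schur complement need not exist. I would dispose of this by a standard perturbation argument: replace $A$ by $A+\varepsilon I$, which is positive definite for every $\varepsilon>0$, invoke the positive definite case just proved, and let $\varepsilon\to 0^{+}$ using continuity of the determinant. Alternatively, if $A_{11}$ is singular, then a null vector of $A_{11}$ extended by zeros is a null vector of $A$, forcing $\det A=\det A_{11}\det A_{22}=0$, so the right-hand inequality holds trivially in that degenerate case.
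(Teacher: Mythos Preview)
Your proof is correct and follows the standard textbook route via Schur complements and the Hadamard inequality. Note, however, that the paper does not actually prove this lemma: it is stated with references to \cite[p.~506]{HJ13} and \cite[p.~217]{Zhang11} and used as a known result, so there is no proof in the paper to compare against. Your argument is essentially the one found in those references.
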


The forthcoming Lemma \ref{lem25}
is similar with Proposition \ref{prop23} in the mathematically written form.
It is not only an extension of Lemma \ref{lem21} for  positive definite block matrices,
but also plays a  key role in our proof of Theorem \ref{thm26}.

\begin{lemma} \label{lem25}
Let $\bA=[A_{ij}]_{i,j=1}^n$ and $\bB=[B_{ij}]_{i,j=1}^n$ be positive definite. Then
\[  \frac{\det (\bA_{\mu}\circ \bB_{\mu})}{\det (\bA_{\mu-1}\circ \bB_{\mu-1})} +
\frac{\det (\bA_{\mu}\bB_{\mu})}{\det (\bA_{\mu-1}\bB_{\mu-1})} \ge
\frac{\det A_{\mu \mu} \det \bB_{\mu}}{\det \bB_{\mu-1}}
+\frac{\det B_{\mu \mu }\det \bA_{\mu}}{\det \bA_{\mu -1}}, \]
where $\bA_{\mu}=[A_{ij}]_{i,j=1}^{\mu}$ and $\bB_{\mu}=[B_{ij}]_{i,j=1}^{\mu}$
for every $\mu=1,2,\ldots ,n$.
\end{lemma}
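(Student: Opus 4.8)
The plan is to reduce the stated block inequality to the Hadamard-product inequality already packaged in Proposition \ref{prop23}, and then to trade Hadamard products for ordinary products via Oppenheim's inequality (\ref{eq1}). First I would linearize the statement with the Schur-complement determinant rule. Writing $S:=(\bA_{\mu}\circ\bB_{\mu})/(\bA_{\mu-1}\circ\bB_{\mu-1})$, $P:=\bA_{\mu}/\bA_{\mu-1}$, $Q:=\bB_{\mu}/\bB_{\mu-1}$, $U:=A_{\mu\mu}$ and $V:=B_{\mu\mu}$ --- all $k\times k$ positive definite --- the identity $\det(M/M_{11})=\det M/\det M_{11}$ turns the first left-hand term into $\det S$, while $\det(\bA_{\mu}\bB_{\mu})=\det\bA_{\mu}\det\bB_{\mu}$ turns the second into $\det P\,\det Q$; the same identities rewrite the right-hand side as $\det U\,\det Q+\det V\,\det P$. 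Thus the lemma is equivalent to the purely $k\times k$ statement
\begin{equation*}
\det S+\det P\,\det Q\ \ge\ \det U\,\det Q+\det V\,\det P .
\end{equation*}
Along the way I would record two structural facts: in the L\"{o}wner order $U\ge P$ and $V\ge Q$ (since $U-P=\bm{X}^{*}\bA_{\mu-1}^{-1}\bm{X}\ge0$, and likewise $V-Q\ge0$), and Fischer's inequality (Lemma \ref{lemfis}) gives the scalar companions $\det U\ge\det P$, $\det V\ge\det Q$. Note that for $k=1$ the Hadamard and ordinary products coincide, so the statement collapses to Lemma \ref{lem21}; the content lies entirely in the block case $k\ge2$.

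Second, I would import the Hadamard-product inequality from the proof of Proposition \ref{prop23}. That argument already establishes, among the positive semidefinite matrices $S$, $U\circ Q$, $V\circ P$, $P\circ Q$, the relations (\ref{eq7}) and (\ref{eq8}), i.e. $S\ge U\circ Q\ge P\circ Q$ and $S\ge V\circ P\ge P\circ Q$, together with the additive relation $S+P\circ Q\ge U\circ Q+V\circ P$ from (\ref{eq6}); hence Lemma \ref{lemlin} yields
\begin{equation*}
\det S+\det(P\circ Q)\ \ge\ \det(U\circ Q)+\det(V\circ P).
\end{equation*}
This is the exact analogue of the target with every ordinary product replaced by a Hadamard product, and is the natural point of departure.

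Finally I would pass from Hadamard to ordinary products. Oppenheim's inequality (\ref{eq1}) gives $\det(U\circ Q)\ge\det U\det Q$, $\det(V\circ P)\ge\det V\det P$ and $\det(P\circ Q)\ge\det P\det Q$. Comparing the target with the displayed consequence of Proposition \ref{prop23} and inserting these bounds, it suffices to prove the single scalar inequality
\begin{equation*}
\bigl(\det(U\circ Q)-\det U\det Q\bigr)+\bigl(\det(V\circ P)-\det V\det P\bigr)\ \ge\ \det(P\circ Q)-\det P\det Q,
\end{equation*}
that is, the two Oppenheim gaps on the left must dominate the one on the right. This comparison of gaps is the crux and the place I expect the real work: the individual gap $\det(X\circ Y)-\det X\det Y$ is \emph{not} monotone in $X$ under the L\"{o}wner order, so one cannot argue termwise, and small examples show the combined inequality can be tight. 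Here I would exploit the facts isolated in the first step --- $U\ge P$, $V\ge Q$ with $\det U\ge\det P$, $\det V\ge\det Q$ --- and reduce each determinant to a product of ordered eigenvalues, using Weyl monotonicity to transfer (\ref{eq7}) and (\ref{eq8}) into coordinatewise estimates; Corollary \ref{lem22}, whose product form is designed exactly to convert such coordinatewise data into a determinantal inequality, is then the tool to close the gap inequality. Matching the hypotheses of Corollary \ref{lem22}, and in particular its additive one, against the eigenvalue data is the delicate point on which the whole proof turns.
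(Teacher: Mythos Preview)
Your reduction is clean and correct up to the displayed ``gap inequality''
\[
\bigl(\det(U\circ Q)-\det U\det Q\bigr)+\bigl(\det(V\circ P)-\det V\det P\bigr)\ \ge\ \det(P\circ Q)-\det P\det Q,
\]
but from that point on the argument is only a hope, not a proof. You yourself note that the Oppenheim gap $\det(X\circ Y)-\det X\det Y$ is not L\"owner-monotone, and Corollary~\ref{lem22} acts on \emph{products} of nonnegative scalars satisfying pointwise and additive constraints; the three quantities above are differences of determinants, not products of anything on which Weyl monotonicity from (\ref{eq7})--(\ref{eq8}) gives coordinatewise control. No choice of $x_i,y_i,w_i,z_i$ is offered, and I do not see one that would make Corollary~\ref{lem22} bite here. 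So the proposal stops precisely at the step that carries all the content.

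The paper avoids this impasse by a different route that you may find instructive: it never passes through Proposition~\ref{prop23}. Instead it views $\bA,\bB$ as ordinary $nk\times nk$ positive definite matrices and applies the \emph{scalar} Lemma~\ref{lem21} at each index $p=(\mu-1)k+1,\ldots,\mu k$; the scalar versions of (\ref{eq7})--(\ref{eq8}) supply the chain conditions $x_p\ge w_p\ge y_p$, $x_p\ge z_p\ge y_p$ needed by Corollary~\ref{lem22}, which then turns the $k$ additive inequalities into a single multiplicative one. The products telescope to the block ratios, leaving $\prod_{p} a_{pp}$ and $\prod_{p} b_{pp}$ on the right, and Hadamard's inequality $\prod_p a_{pp}\ge\det A_{\mu\mu}$ finishes. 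In short, the paper feeds Corollary~\ref{lem22} the $k$ scalar Schur-complement ratios rather than trying to compare $k\times k$ Oppenheim gaps; that is where your plan and the paper's diverge, and why the paper's closes.
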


\begin{proof}
We first can see from (\ref{eq7}) and (\ref{eq8})  that
\[ \frac{\det (A_p\circ B_p)}{\det (A_{p-1}\circ B_{p-1})} \ge
\frac{a_{pp} \det B_p}{\det B_{p-1}} \ge \frac{\det (A_pB_p)}{\det (A_{p-1}B_{p-1})} , \]
and
\[  \frac{\det (A_p\circ B_p)}{\det (A_{p-1}\circ B_{p-1})} \ge
\frac{b_{pp}\det A_p}{\det A_{p-1}} \ge \frac{\det (A_pB_p)}{\det (A_{p-1}B_{p-1})}. \]
By Lemma \ref{lem21} and Corollary \ref{lem22}, we have
\begin{align*}
&\prod_{p=(\mu-1)k+1}^{\mu k} \frac{\det (A_p\circ B_p)}{\det (A_{p-1}\circ B_{p-1})} +
 \prod_{p=(\mu-1)k+1}^{\mu k} \frac{\det (A_pB_p)}{\det (A_{p-1}B_{p-1})} \\
&\quad\quad  \ge
\prod_{p=(\mu-1)k+1}^{\mu k} \frac{a_{pp} \det B_p}{\det B_{p-1}}
+ \prod_{p=(\mu-1)k+1}^{\mu k} \frac{b_{pp}\det A_p}{\det A_{p-1}}.
\end{align*}
Note that $A_{\mu k}=[a_{ij}]_{i,j=1}^{\mu k}=[A_{ij}]_{i,j=1}^{\mu}=\bA_{\mu}$,
then the above inequality could be written as
\begin{align*}
& \frac{\det (\bA_{\mu}\circ \bB_{\mu})}{\det (\bA_{\mu-1}\circ \bB_{\mu-1})} +
\frac{\det (\bA_{\mu}\bB_{\mu})}{\det (\bA_{\mu-1}\bB_{\mu-1})} \\
&\quad \ge
\left(\prod_{p=(\mu-1)k+1}^{\mu k}a_{pp}\right) \frac{ \det \bB_{\mu}}{\det \bB_{\mu-1}}   +
\left(\prod_{p=(\mu-1)k+1}^{\mu k}b_{pp}\right) \frac{ \det \bA_{\mu}}{\det \bA_{\mu-1}} ,
\end{align*}
which together with Lemma \ref{lemfis} leads to the desired result.
\end{proof}

The following Theorem \ref{thm26} is just the case $m=2$ of Theorem \ref{thm11}.

\begin{theorem} \label{thm26}
Let $\bA=[A_{ij}]_{i,j=1}^n$ and $\bB=[B_{ij}]_{i,j=1}^n$ be positive definite. Then
\begin{equation*}
\begin{aligned}
 \det (\bm{A}\circ \bm{B}) \ge
\det (\bm{A} \bm{B})   \prod_{\mu=2}^n
\left( \frac{\det A_{\mu \mu} \det \bm{A}_{\mu-1}}{\det \bm{A}_{\mu}}
 + \frac{\det B_{\mu \mu} \det \bm{B}_{\mu-1}}{\det \bm{B}_{\mu}} -1 \right),
\end{aligned}
\end{equation*}
where $\bA_{\mu}=[A_{ij}]_{i,j=1}^{\mu}$ and $\bB_{\mu}=[B_{ij}]_{i,j=1}^{\mu}$
for every $\mu=1,2,\ldots ,n$.
\end{theorem}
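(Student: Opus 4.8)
The plan is to rewrite the desired estimate as a telescoping product whose individual factors are supplied by Lemma \ref{lem25}. I would set
\[ f_\mu := \frac{\det(\bA_\mu \circ \bB_\mu)}{\det(\bA_\mu \bB_\mu)}, \qquad \mu = 1,2,\ldots,n, \]
so that $f_n = \det(\bA \circ \bB)/\det(\bA\bB)$ is exactly the ratio to be bounded below, and the theorem becomes equivalent to $f_n \ge \prod_{\mu=2}^n g_\mu$, where $g_\mu := \det A_{\mu\mu}\det\bA_{\mu-1}/\det\bA_\mu + \det B_{\mu\mu}\det\bB_{\mu-1}/\det\bB_\mu - 1$. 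The base value is immediate: since $\bA_1 = A_{11}$ and $\bB_1 = B_{11}$ are ordinary positive definite matrices with $\det(\bA_1\bB_1) = \det A_{11}\det B_{11}$, the scalar Oppenheim inequality (\ref{eq2}) gives $\det(A_{11}\circ B_{11}) \ge \det(A_{11}B_{11})$, that is $f_1 \ge 1$.

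The decisive step is to control the consecutive ratio $f_\mu/f_{\mu-1}$, which equals $x_\mu/y_\mu$ for $x_\mu := \det(\bA_\mu\circ\bB_\mu)/\det(\bA_{\mu-1}\circ\bB_{\mu-1})$ and $y_\mu := \det(\bA_\mu\bB_\mu)/\det(\bA_{\mu-1}\bB_{\mu-1}) = \det\bA_\mu\det\bB_\mu/(\det\bA_{\mu-1}\det\bB_{\mu-1})$. Dividing the inequality of Lemma \ref{lem25}, namely $x_\mu + y_\mu \ge \det A_{\mu\mu}\det\bB_\mu/\det\bB_{\mu-1} + \det B_{\mu\mu}\det\bA_\mu/\det\bA_{\mu-1}$, through by $y_\mu$ makes the two right-hand terms collapse neatly: the first reduces to $\det A_{\mu\mu}\det\bA_{\mu-1}/\det\bA_\mu$ and the second to $\det B_{\mu\mu}\det\bB_{\mu-1}/\det\bB_\mu$, while the left side becomes $x_\mu/y_\mu + 1$. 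This delivers exactly $f_\mu/f_{\mu-1} \ge g_\mu$ for each $\mu = 2,\ldots,n$.

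It remains to chain these factor estimates into a single product inequality, and the point to watch is that multiplying inequalities requires the right-hand sides to be nonnegative. This is where Fischer's inequality (Lemma \ref{lemfis}) enters: applying it to $\bA_\mu$ and $\bB_\mu$ regarded as $2\times 2$ block matrices with diagonal blocks $\bA_{\mu-1}, A_{\mu\mu}$ and $\bB_{\mu-1}, B_{\mu\mu}$ yields $\det A_{\mu\mu}\det\bA_{\mu-1} \ge \det\bA_\mu$ and likewise for $\bB$, so each fraction in $g_\mu$ is at least $1$ and hence $g_\mu \ge 1 > 0$. Since every $f_\mu$ is positive, multiplying the $n-1$ inequalities $f_\mu/f_{\mu-1} \ge g_\mu \ge 0$ and using $f_1 \ge 1$ gives $f_n = f_1\prod_{\mu=2}^n (f_\mu/f_{\mu-1}) \ge \prod_{\mu=2}^n g_\mu$, which is the assertion. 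I expect the only genuinely delicate point to be this positivity bookkeeping needed to pass from the termwise bounds to the product; Lemma \ref{lem25} already carries the analytic weight, so the remainder is essentially algebraic rearrangement.
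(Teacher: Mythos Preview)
Your proposal is correct and follows essentially the same route as the paper's proof: both divide the inequality of Lemma~\ref{lem25} by $\det(\bA_\mu\bB_\mu)/\det(\bA_{\mu-1}\bB_{\mu-1})$ to obtain the factor-by-factor estimate, telescope over $\mu=2,\ldots,n$, and invoke Oppenheim's inequality for the base case $\mu=1$. Your explicit verification via Fischer's inequality that each $g_\mu\ge 1$ (so that the termwise bounds may legitimately be multiplied) is a point the paper leaves implicit, but otherwise the arguments coincide.
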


\begin{proof}
By Lemma \ref{lem25}, we can obtain
\begin{align*}
 \frac{\det (\bA_{\mu}\circ \bB_{\mu})}{\det (\bA_{\mu-1}\circ \bB_{\mu-1})}
 &\ge \frac{\det (\bA_{\mu}\bB_{\mu})}{\det (\bA_{\mu-1}\bB_{\mu-1})} \\
&\quad \times \left( \frac{\det A_{\mu \mu} \det \bA_{\mu -1}}{\det \bA_{\mu}}
+\frac{\det B_{\mu \mu }\det \bB_{\mu -1}}{\det \bB_{\mu }}-1 \right).
\end{align*}
Therefore, we get
\begin{align*}
\prod_{\mu=2}^n \frac{\det (\bA_{\mu}\circ \bB_{\mu})}{\det (\bA_{\mu-1}\circ \bB_{\mu-1})}
&\ge \prod_{\mu=2}^n \frac{\det (\bA_{\mu}\bB_{\mu})}{\det (\bA_{\mu-1}\bB_{\mu-1})} \\
&\quad \times \left( \frac{\det A_{\mu \mu} \det \bA_{\mu -1}}{\det \bA_{\mu}}
+\frac{\det B_{\mu \mu }\det \bB_{\mu -1}}{\det \bB_{\mu }}-1 \right).
\end{align*}
Note that Oppenheim's inequality (\ref{eq1}) leads to
\[ \det (\bA_1 \circ \bB_1) \ge \det (\bA_1\bB_1).\]
Thus,  the required inequality
now immediately follows.
\end{proof}

\section{The General case}

\label{sec3}

Now we are in a position to present a proof of our main result Theorem \ref{thm11}.

\vspace{0.3cm}

\noindent
{\bf Proof of Theorem \ref{thm11}.}
We show the proof by induction on $m$.
When $m=2$,
the required result is guaranteed by Theorem \ref{thm26}.
Assume that the required inequality is true for the case $m-1$, that is
\begin{equation*}
\begin{aligned}
\det \left( \prod_{i=1}^{m-1} \circ \bA^{(i)}\right)
\ge  \det \left( \prod_{i=1}^{m-1} \bA^{(i)}\right) \prod_{\mu=2}^n  \left( \sum_{i=1}^{m-1}
\frac{\det A^{(i)}_{\mu \mu} \det \bm{A}^{(i)}_{\mu-1}}{\det \bm{A}^{(i)}_{\mu}}
- (m-2) \right).
\end{aligned}
\end{equation*}
Now we consider the case $m>2$. Then we have
\begin{align*}
&\det \left( \prod_{i=1}^m \circ \bA^{(i)}\right) \\
&=  \det \left( \Bigl(\prod_{i=1}^{m-1} \circ \bA^{(i)}\Bigr) \circ \bA^{(m)}\right)
~~~\hbox{(by Theorem \ref{thm26})} \\
&\ge  \det \Bigl(\prod_{i=1}^{m-1} \circ \bA^{(i)}\Bigr)
\left( \det \bA^{(m)}\right) \\
&\quad \times
\prod_{\mu=2}^n
\left( \frac{\det \left(\prod\limits_{i=1}^{m-1} \!\!\circ \bA^{(i)}\right)_{\!\!\mu\mu}
\!\! \!\det  \Bigl(\prod\limits_{i=1}^{m-1} \!\!\circ \bA^{(i)}\Bigr)_{\!\!\mu \!-\!1}}{
\det \Bigl(\prod\limits_{i=1}^{m-1} \!\!\circ \bA^{(i)}\Bigr)_{\!\!\mu}}  +
 \frac{\det A^{(m)}_{\mu \mu} \det \bA^{(m)}_{\mu -1}}{
\det \bA^{(m)}_{\mu}} -1 \right) \\
&\ge \prod_{i=1}^m \left( \det \bA^{(i)}\right)
 \times \prod_{\mu=2}^n  \left( \sum_{i=1}^{m-1}
\frac{\det A^{(i)}_{\mu \mu} \det \bm{A}^{(i)}_{\mu-1}}{\det \bm{A}^{(i)}_{\mu}}
 - (m-2) \right) \\
&\quad \times
\prod_{\mu=2}^n
\left( \frac{\det \left(\prod\limits_{i=1}^{m-1} \!\!\circ \bA^{(i)}\right)_{\!\!\mu\mu}
\!\! \!\det  \Bigl(\prod\limits_{i=1}^{m-1} \!\!\circ \bA^{(i)}\Bigr)_{\!\!\mu \!-\!1}}{
\det \Bigl(\prod\limits_{i=1}^{m-1} \!\!\circ \bA^{(i)}\Bigr)_{\!\!\mu}} +
 \frac{\det A^{(m)}_{\mu \mu} \det \bA^{(m)}_{\mu -1}}{
\det \bA^{(m)}_{\mu}} -1 \right).
\end{align*}
For notational convenience, we denote
\[ R_{\mu}:=\sum_{i=1}^{m-1}
 \frac{\det A^{(i)}_{\mu \mu} \det \bm{A}^{(i)}_{\mu-1}}{\det \bm{A}^{(i)}_{\mu}}
 - (m-2) , \]
and
\[ S_{\mu}:= \frac{\det \left(\prod\limits_{i=1}^{m-1} \!\!\circ \bA^{(i)}\right)_{\!\!\mu\mu}
\!\! \det  \Bigl(\prod\limits_{i=1}^{m-1} \!\!\circ \bA^{(i)}\Bigr)_{\!\!\mu -1}}{
\det \Bigl(\prod\limits_{i=1}^{m-1} \!\!\circ \bA^{(i)}\Bigr)_{\!\!\mu}} +
 \frac{\det A^{(m)}_{\mu \mu} \det \bA^{(m)}_{\mu -1}}{
\det \bA^{(m)}_{\mu}} -1. \]
By Fischer's inequality (Lemma \ref{lemfis}), we can see that
\[ \det A^{(i)}_{\mu \mu} \det \bm{A}^{(i)}_{\mu-1} \ge \det \bm{A}^{(i)}_{\mu},
\quad i=1,2,\ldots ,m,  \]
which leads to
\begin{equation} \label{r1}
R_{\mu}  =  \sum_{i=1}^{m-1}
 \frac{\det A^{(i)}_{\mu \mu} \det \bm{A}^{(i)}_{\mu-1}}{\det \bm{A}^{(i)}_{\mu}}
 - (m-2) \ge 1.
\end{equation}
On the other hand, by Fischer's inequality (Lemma \ref{lemfis}) again, we have
\begin{gather*}
 \det \Bigl(\prod\limits_{i=1}^{m-1} \!\!\circ \bA^{(i)}\Bigr)_{\!\!\mu\mu}
  \det  \Bigl(\prod\limits_{i=1}^{m-1} \!\!\circ \bA^{(i)}\Bigr)_{\!\!\mu -1}
 \ge \det \Bigl(\prod\limits_{i=1}^{m-1} \!\!\circ \bA^{(i)}\Bigr)_{\!\!\mu}.
\end{gather*}
Therefore, we obtain
\begin{equation} \label{s1}
 S_{\mu} \ge  \frac{\det A^{(m)}_{\mu \mu} \det \bA^{(m)}_{\mu -1}}{
\det \bA^{(m)}_{\mu}} \ge 1.
\end{equation}
Since $R_{\mu}\ge 1$ and $S_{\mu}\ge 1$, this leads to
\[ R_{\mu}S_{\mu} \ge R_{\mu} +S_{\mu} -1.\]
Hence, we get from (\ref{r1}) and (\ref{s1}) that
\begin{align*}
\det \left( \prod_{i=1}^m \circ \bA^{(i)}\right)
&\ge \prod_{i=1}^m \left( \det \bA^{(i)}\right)
\prod_{\mu=2}^n R_{\mu} \prod_{\mu=2}^n S_{\mu} \\
&\ge \prod_{i=1}^m \left( \det \bA^{(i)}\right)
\prod_{\mu=2}^n (R_{\mu} +S_{\mu} -1) \\
&\ge \prod_{i=1}^m \left(\det \bA^{(i)}\right)
 \prod_{\mu=2}^n  \left( \sum_{i=1}^m
 \frac{\det A^{(i)}_{\mu \mu} \det \bm{A}^{(i)}_{\mu-1}}{\det \bm{A}^{(i)}_{\mu}}
 - (m-1) \right).
\end{align*}
This completes the proof.

\vspace{0.5cm}

At the end of this paper,  we are going to prove Theorem \ref{thm25}.
First, we need to introduce a numerical inequality,
which could be found in \cite{DL20}.
For completeness, we here include a proof for the convenience.

\begin{lemma} \label{lem}
If $\bigl( a^{(i)}_{1}, a^{(i)}_2,\ldots ,a^{(i)}_n\bigr)\in \mathbb{R}^n,i=1,\ldots ,m$ and
$a_{\mu}^{(i)}\ge 1$ for all $i$ and $\mu$, then
\begin{equation*}
 \prod_{\mu=1}^n \left( \sum_{i=1}^m a^{(i)}_{\mu} -(m-1)\right)
\ge \sum_{i=1}^m \prod_{\mu=1}^n a^{(i)}_{\mu} -(m-1).
\end{equation*}
\end{lemma}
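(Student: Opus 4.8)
The plan is to argue by induction on the common length $n$ of the sequences, with $m$ held fixed. First I would record the elementary but essential positivity facts. Since $a_\mu^{(i)}\ge 1$ for every $i$, we have $\sum_{i=1}^m a_\mu^{(i)}\ge m$, so each factor on the left satisfies $\sum_{i=1}^m a_\mu^{(i)}-(m-1)\ge 1$; likewise every partial product $\prod_{\mu}a_\mu^{(i)}$ is at least $1$, and consequently the whole right-hand side is at least $1>0$. These bounds are precisely what allow me to multiply inequalities during the induction without reversing their direction.

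For the base case $n=1$ both sides collapse to $\sum_{i=1}^m a_1^{(i)}-(m-1)$, so equality holds. For the inductive step I would peel off the last index: write the left-hand side as $\bigl(\sum_{i=1}^m a_n^{(i)}-(m-1)\bigr)\cdot\prod_{\mu=1}^{n-1}\bigl(\sum_{i=1}^m a_\mu^{(i)}-(m-1)\bigr)$, apply the induction hypothesis to the product over $\mu=1,\dots,n-1$, and use the nonnegativity of the detached factor to obtain a lower bound of the shape $\bigl(\sum_i x_i-(m-1)\bigr)\bigl(\sum_i y_i-(m-1)\bigr)$, where $x_i=a_n^{(i)}$ and $y_i=\prod_{\mu=1}^{n-1}a_\mu^{(i)}$, both $\ge 1$.

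The crux is then the two-sequence inequality $\bigl(\sum_i x_i-(m-1)\bigr)\bigl(\sum_i y_i-(m-1)\bigr)\ge \sum_i x_i y_i-(m-1)$ for $x_i,y_i\ge 1$, since its right-hand side is exactly $\sum_i\prod_{\mu=1}^n a_\mu^{(i)}-(m-1)$, the target expression. I expect this two-factor bound to be the main obstacle, but it dissolves under the substitution $x_i=1+u_i$, $y_i=1+v_i$ with $u_i,v_i\ge 0$: after expanding both sides, all the common terms cancel and the claim reduces to $\bigl(\sum_i u_i\bigr)\bigl(\sum_i v_i\bigr)\ge \sum_i u_i v_i$, which is immediate because the difference is the sum of the nonnegative cross terms $\sum_{i\ne j}u_i v_j$. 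Chaining this two-factor inequality with the induction hypothesis closes the inductive step and completes the proof.
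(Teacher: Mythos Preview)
Your argument is correct and follows essentially the same route as the paper: induction on $n$, peeling off the last factor, applying the induction hypothesis, and then verifying the two-sequence inequality $\bigl(\sum_i x_i-(m-1)\bigr)\bigl(\sum_i y_i-(m-1)\bigr)\ge \sum_i x_iy_i-(m-1)$ for $x_i,y_i\ge 1$. The only cosmetic difference is that the paper carries out this last step by writing the first factor as $\sum_i(a_{k+1}^{(i)}-1)+1$ and distributing, whereas your substitution $x_i=1+u_i$, $y_i=1+v_i$ packages the same computation more cleanly.
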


\begin{proof}
We apply induction on $n$.
When $n=1$, there is nothing to show.
Suppose that the required inequality is true for $n=k$. Then we consider the case $n=k+1$,
\begin{align*}
& \prod_{\mu=1}^{k+1} \left( \sum_{i=1}^m a^{(i)}_{\mu} -(m-1)\right)  \\
&= \left( \sum_{i=1}^m a^{(i)}_{k+1} -(m-1)\right) \cdot
\prod_{\mu=1}^k \left( \sum_{i=1}^m a^{(i)}_{\mu} -(m-1)\right) \\
&\ge \left( \sum_{i=1}^m a^{(i)}_{k+1} -(m-1)\right) \cdot
\left( \sum_{i=1}^m \prod_{\mu=1}^k a^{(i)}_{\mu} -(m-1)\right) \\
&= \sum_{i=1}^m \left(a_{k+1}^{(i)}-1\right) \!\biggl( \sum_{j=1}^m \prod_{\mu=1}^k
a_{\mu}^{(j)}-(m-1)\biggr)
+  \sum_{i=1}^m \prod_{\mu=1}^k
a_{\mu}^{(i)}-(m-1) \\
&= \sum_{i=1}^m \left(a_{k+1}^{(i)}-1\right) \!\biggl( \sum_{j=1,j\neq i}^m \prod_{\mu=1}^k
a_{\mu}^{(j)}-(m-1)\biggr)
+ \sum_{i=1}^m \left(a_{k+1}^{(i)}-1\right) \prod_{\mu=1}^k
a_{\mu}^{(i)}
+\sum_{i=1}^m \prod_{\mu=1}^ka_{\mu}^{(i)}-(m-1)\\
&= \sum_{i=1}^m \left(a_{k+1}^{(i)}-1\right) \!\biggl( \sum_{j=1,j\neq i}^m \prod_{\mu=1}^k
a_{\mu}^{(j)}-(m-1)\biggr)
+ \sum_{i=1}^m \prod_{\mu=1}^{k+1}a_{\mu}^{(i)} -(m-1)
 \\
&\ge \sum_{i=1}^m \prod_{\mu=1}^{k+1}a_{\mu}^{(i)} -(m-1).
\end{align*}
Thus, the required inequality holds for $n=k+1$, so the proof of  induction step is complete.
\end{proof}

\noindent
{\bf Remark.}~When $m=2$, Lemma \ref{lem}  implies that for every $a_{\mu},b_{\mu}\ge 1$, then
\begin{equation} \label{eqlin}
  \prod_{\mu=1}^n (a_{\mu}+b_{\mu}-1) \ge \prod_{\mu=1}^n a_{\mu} +
\prod_{\mu=1}^n b_{\mu} -1.
\end{equation}
This inequality (\ref{eqlin}) plays an important role  in \cite{Lin14}
for deriving determinantal inequalities,
and we can see from  (\ref{eqlin}) that Chen's result (\ref{eq4})
is indeed an improvement of (\ref{eq3}).
On the other hand, (\ref{eqlin}) could be obtained from Corollary \ref{lem22}.
The above proof of Lemma \ref{lem} is by induction on $n$.
In fact, combining  (\ref{eqlin}) and by induction on $m$,
one could get another  way to prove  Lemma \ref{lem}. 
\vspace{0.3cm}

Now, we are ready to present a proof of Theorem \ref{thm25}.

\vspace{0.3cm}

\noindent
{\bf Proof of Theorem \ref{thm25}.}
Without loss of generality,
we may assume by a standard perturbation argument that all $\bA^{(i)}$ are positive definite.
Thus, the required inequality  could be rewritten  as
\begin{equation} \label{eq12}
\det \left( \prod_{i=1}^m \circ \bA^{(i)}\right)
\ge \prod_{i=1}^m \left(\det \bA^{(i)}\right)
  \left( \sum_{i=1}^m
 \frac{\prod_{\mu=1}^n \det A^{(i)}_{\mu \mu}}{\det \bm{A}^{(i)}}
- (m-1) \right).
\end{equation}
By Fischer's inequality (Lemma \ref{lemfis}), we have
\[ \det A^{(i)}_{\mu \mu} \det \bm{A}^{(i)}_{\mu-1}\ge \det \bm{A}^{(i)}_{\mu}.  \]
Therefore, it follows from Theorem \ref{thm11} and Lemma \ref{lem} that
\begin{equation*}
\begin{aligned}
& \det \left( \prod_{i=1}^m \circ \bA^{(i)}\right) \\
&\ge \prod_{i=1}^m \left(\det \bA^{(i)}\right)
  \prod_{\mu=2}^n  \left( \sum_{i=1}^m
 \frac{\det A^{(i)}_{\mu \mu} \det \bm{A}^{(i)}_{\mu-1}}{\det \bm{A}^{(i)}_{\mu}}
 - (m-1) \right) \\
&\ge  \prod_{i=1}^m \left(\det \bA^{(i)}\right)
\left( \sum_{i=1}^m  \prod_{\mu=2}^n
 \frac{\det A^{(i)}_{\mu \mu} \det \bm{A}^{(i)}_{\mu-1}}{\det \bm{A}^{(i)}_{\mu}}
 - (m-1) \right).
\end{aligned}
\end{equation*}
Observe that
\[ \prod_{\mu=2}^n \frac{\det A^{(i)}_{\mu \mu} \det \bm{A}^{(i)}_{\mu-1}}{\det \bm{A}^{(i)}_{\mu}}
= \frac{\prod_{\mu=1}^n \det A^{(i)}_{\mu \mu}}{\det \bm{A}^{(i)}} .  \]
Hence, the proof of (\ref{eq12}) is complete.

\section*{Acknowledgments}
We are thankful to anonymous reviewers for the helpful comments and suggestions. The author is deeply indebted to Minghua Lin
for many useful discussions over the years as well as helpful comments
on the manuscript.
This work was supported by  NSFC (Grant Nos. 11931002 and 11671124).

\end{document}